\numberwithin{equation}{section}
\newtheorem{theorem}[equation]{Theorem}
\newtheorem{lemma}[equation]{Lemma}
\newtheorem{Th}{Theorem}
\theoremstyle{definition}
\newtheorem{remark}[equation]{Remark}
\renewcommand{\phi}{\varphi}
\newcommand{\F}{\mathcal{F}}
\newcommand{\G}{\mathcal{G}}
\newcommand{\Hom}{\operatorname{Hom}}
\newcommand{\Inj}{\operatorname{Inj}}
\newcommand{\Aut}{\operatorname{Aut}}
\newcommand{\Inn}{\operatorname{Inn}}
\newcommand{\res}{\operatorname{res}}
\newcommand{\m}{\mathcal}
\def\h{\mathfrak{hyp}}
\def\Hom{\mathrm{Hom}}
\def\Aut{\mathrm{Aut}}
\def\Inn{\mathrm{Inn}}
\def\Syl{\mathrm{Syl}}
\def\Id{\mathrm{Id}}
\def \<{\langle }
\def \>{\rangle }
\theoremstyle{remark}
\numberwithin{equation}{section}
\begin{document}
% \title[short text for running head]{full title}
\title{Control of fusion by abelian subgroups of the hyperfocal subgroup}

%    Only \author and \address are required; other information is
%    optional.  Remove any unused author tags.

%    author one information
% \author[short version for running head]{name for top of paper}
\author[E.~Henke]{Ellen Henke}
\address{Institute of Mathematics, 
University of Aberdeen, Fraser Noble Building, Aberdeen AB24 3UE, U.K.}
\email{ellen.henke@abdn.ac.uk}

\author[J.~Liao]{Jun Liao}
\address{School of Mathematics and Statistics,
Hubei University,
Wuhan, $430062$, P. R. China.}
\curraddr{Institute of Mathematics, University of Aberdeen, Fraser Noble Building, Aberdeen AB24 3UE, U.K.}
\email{jliao@pku.edu.cn} 
\thanks{The Second author was supported by the National Natural Science Foundation of China (11401186).}

%    \subjclass is required.
\subjclass[2010]{Primary 20D20, 20E45, 20J06}

\date{}

\dedicatory{}

\begin{abstract}
We prove that an isomorphism between saturated fusion systems over the same finite $p$-group
is detected on the elementary abelian subgroups of the hyperfocal subgroup if $p$ is odd, and on the abelian subgroups of the hyperfocal subgroup of exponent at most $4$ if $p=2$. For odd $p$, this has implications for mod $p$ group cohomology.
\end{abstract}

\maketitle

\section{Introduction}

In 1971, Quillen \cite{Q} published two articles relating properties of the mod $p$ cohomology ring of a group $G$ to the elementary abelian $p$-subgroups of $G$. The results hold for any prime $p$ and any group $G$ which is a compact Lie group (e.g. a finite group). Quillen studied in particular varieties of mod $p$ cohomology rings and proved a stratification theorem stating that the variety of the mod $p$ cohomology ring of $G$ can be broken up into pieces corresponding to the $G$-conjugacy classes of elementary abelian $p$-subgroups of $G$.\footnote{More precisely, Quillen studied the variety of the commutative subring of $H^*(G;\mathbb{F}_p)$ of elements of even degree. However, his stratification theorem holds similarly for the variety of $H^*(G;\mathbb{F}_p)$; see Remark~\ref{EvenDegreeRemark}.} Therefore, it is of interest to study conjugacy relations between elementary abelian $p$-subgroups. From now on we assume that $G$ is finite and $H$ is a subgroup of $G$ of index prime to $p$. For any two subgroups $A$ and $B$ of $G$, we write $\Hom_G(A,B)$ for the set of group homomorphisms from $A$ to $B$ that are obtained via conjugation by an element of $G$. As a consequence of Quillen's stratification theorem,  $H$ controls fusion of elementary abelian subgroups in $G$, if the inclusion map from $H$ to $G$ induces an isomorphism between the varieties of the mod $p$ cohomology rings of $H$ and $G$. Here we say that the subgroup $H$ controls fusion of elementary abelian subgroups in $G$ if $\Hom_H(A,B)=\Hom_G(A,B)$ for all elementary abelian subgroups $A$ and $B$ of $H$. Similarly we say that $H$ controls $p$-fusion in $G$ if $\Hom_H(A,B)=\Hom_G(A,B)$ for all $p$-subgroups $A$ and $B$ of $H$. By the Cartan--Eilenberg stable elements formula \cite[XII.10.1]{CE}, the inclusion map from $H$ to $G$ induces an isomorphism in mod $p$ group cohomology if $H$ controls fusion in $G$. Together with Quillen's fundamental results, this motivates the study of connections between control of fusion of elementary abelian subgroups and control of $p$-fusion. 

\smallskip

If $H=S$ is a Sylow $p$-subgroup of $G$ and $p$ is odd, Quillen \cite{Q1} proved as a first illustration of his theory that $G$ is nilpotent if the inclusion map from $S$ to $G$ induces an isomorphism between the corresponding varieties. We recall that, by a classical theorem of Frobenius, $G$ is nilpotent if and only if $S$ controls fusion in $G$. So Quillen showed that $S$ controls fusion in $G$ if $S$ controls fusion of elementary abelian subgroups. Variations of this theorem were proved in \cite{Henn,Brun,Gon,IN,Can,BESW}, but all maintaining the hypothesis that $H=S$ is a Sylow $p$-subgroup. Only relatively recently, Benson, Grodal and the first author of this paper proved a result that holds more generally for any subgroup $H$ of index prime to $p$; see \cite{BGH}. More precisely, it is shown that $H$ controls fusion in $G$ (and thus the inclusion map from $H$ to $G$ induces an isomorphism in mod $p$ group cohomology), if the inclusion map induces an isomorphism between the corresponding varieties, i.e. if $H$ controls fusion of elementary abelian subgroups of $G$. This is obtained as a consequence of a theorem that is stated and proved for saturated fusion systems; see \cite[Theorem~B]{BGH}. In this short note, we point out that actually a slightly stronger version of this theorem holds. We refer the reader to \cite[Part~I]{AKO} for an introduction to fusion systems.

\begin{Th}[Small exponent abelian subgroups of the hyperfocal subgroup control fusion] \label{MainThm}
Let $\G \subseteq \F$ be two saturated fusion systems over the same
finite $p$-group $S$. 
Suppose that
$\Hom_\G(A,B)=\Hom_\F(A,B)$  for all $A,B \leq \h(\F)$ with $A,B$
elementary abelian if $p$ is odd, and abelian of exponent at
most $4$ if $p=2$. Then $\G=\F$.
\end{Th}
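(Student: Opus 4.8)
The plan is to revisit the proof of \cite[Theorem~B]{BGH} and to observe that whenever that argument detects a difference between two saturated fusion systems on a small abelian subgroup, the subgroup in question can already be found inside the hyperfocal subgroup. Concretely I would argue by induction on $|S|$, or via a minimal counterexample. Assume $\G\subsetneq\F$ satisfies the hypotheses. By the Alperin--Goldschmidt fusion theorem $\F$ is generated by the groups $\Aut_\F(R)$ with $R$ running over the $\F$-essential subgroups together with $R=S$; since $\G\subseteq\F$ but $\G\neq\F$, there is such an $R$ with $\Aut_\G(R)\subsetneq\Aut_\F(R)$. Choose one of maximal order and --- after the usual reduction to a fully $\F$-normalised representative, which is then automatically fully $\G$-normalised --- assume $R$ is fully normalised in both $\F$ and $\G$. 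Then $\Inn(R)\le\Aut_S(R)\le\Aut_\G(R)\lneq\Aut_\F(R)$ with $\Aut_S(R)$ a common Sylow $p$-subgroup, so $\Aut_\F(R)=O^p(\Aut_\F(R))\cdot\Aut_S(R)$ and hence $O^p(\Aut_\F(R))\not\le\Aut_\G(R)$.

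Next comes the passage into the hyperfocal subgroup, which is the point of the statement. Pick a $p'$-element $\phi\in O^p(\Aut_\F(R))\setminus\Aut_\G(R)$. No nontrivial automorphism centralises $R$, so $[R,\phi]\neq 1$; by definition of the hyperfocal subgroup $[R,\phi]\le[R,O^p(\Aut_\F(R))]\le\h(\F)$; and coprime action gives $[[R,\phi],\phi]=[R,\phi]$, so $\phi$ acts nontrivially and coprimely on the $p$-group $V:=[R,\phi]\le\h(\F)$. One now invokes the group-theoretic dichotomy that underlies the exponent hypothesis and forms the technical heart of \cite{BGH}: a nontrivial coprime action on a $p$-group is already detected, in the sense of producing extra fusion, on the elementary abelian subgroups when $p$ is odd, but for $p=2$ one must allow abelian subgroups of exponent at most $4$ --- the action of $C_3$ on $Q_8$, trivial on $\Omega_1(Q_8)=Z(Q_8)$ but visible on the subgroups of order $4$, being the prototypical obstruction. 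Applied to $V$ this yields an abelian subgroup $A\le V\le\h(\F)$ of the required type and an $\F$-morphism out of $A$ that is not a $\G$-morphism, contradicting the hypothesis and proving $\G=\F$.

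The step I expect to be the main obstacle is the last one: upgrading the strict inclusion $\Aut_\G(R)\subsetneq\Aut_\F(R)$ to a strict inclusion $\Hom_\G(A,\h(\F))\subsetneq\Hom_\F(A,\h(\F))$ for a genuinely small abelian $A$. The difficulty is that the extra $\F$-morphism out of $A$ could a priori already be realised inside $\G$ by fusion coming from subgroups of order at least $|R|$, so one needs the maximality of $|R|$, Alperin's theorem applied within $\G$, and the extension/restriction axioms for morphisms in saturated fusion systems in order to pin this down. The case $R=S$, where one is comparing the two $p'$-groups $\Out_\G(S)\lneq\Out_\F(S)$, must be handled with the same care so that the difference really descends into $\h(\F)$ and is not absorbed elsewhere in $\G$. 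All of this bookkeeping is essentially already present in \cite{BGH}; the only new observation needed is the trivial one that the relevant commutators $[R,O^p(\Aut_\F(R))]$ lie in the hyperfocal subgroup by construction.
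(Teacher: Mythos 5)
You have correctly isolated the one genuinely new observation, namely that $[P,O^p(\Aut_\F(P))]\leq\h(\F)$, so that the critical-subgroup machinery of \cite{BGH} can be made to produce test subgroups inside the hyperfocal subgroup; this is indeed the heart of the paper's proof, which otherwise follows \cite[Theorem~B]{BGH} step for step (induction on $|S:P|$, reduction to fully normalized centric $P$ via Alperin's theorem). However, the step you yourself flag as ``the main obstacle'' is a genuine gap, and the mechanism you propose for closing it cannot work as stated. You want to produce an abelian $A\leq\h(\F)$ of exponent $p$ (resp.\ $4$) together with an $\F$-morphism out of $A$ that is \emph{not} a $\G$-morphism, and contradict the hypothesis. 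But the hypothesis asserts precisely that no such morphism exists, and the paper never exhibits one: the logic runs in the opposite direction. The equality $\Hom_\F(A,S)=\Hom_\G(A,S)$ is used \emph{positively} via \cite[Main Lemma~2.4]{BGH} (Lemma~2.3 of this paper): if $A\unlhd P$ satisfies this equality and $\Aut_\F(R)=\Aut_\G(R)$ for all $P<R\leq N_S(P)$, then $\Aut_\F(P)=\langle\Aut_\G(P),C_{\Aut_\F(P)}(A)\rangle$. The proof is then finished not by locating a rogue morphism but by showing that $C_{\Aut_\F(P)}(A)$ is a $p$-group, hence (after replacing $A$ by an $\Aut_\F(P)$-conjugate) contained in $\Aut_S(P)\leq\Aut_\G(P)$.

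This also exposes a second problem with your choice of test subgroup. Taking $V=[R,\phi]$ for a single $p'$-element $\phi$ does not give a subgroup of exponent $p$ (resp.\ $4$), and even after passing to a critical subgroup there is no reason why $\phi$ should act nontrivially on an \emph{abelian} subgroup of it --- the automorphism $\phi$ may centralize every maximal abelian subgroup $A$ of the critical subgroup, which is harmless for the paper's argument (one only needs $C_{\Aut_\F(P)}(A)$ to be a $p$-group, which follows from maximality of $A$ and Thompson's $A\times B$ lemma) but fatal for yours, since your contradiction requires both $\phi|_A\neq\Id_A$ and $\phi|_A\notin\Hom_\G(A,S)$. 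The paper handles this by proving a sharpened critical-subgroup theorem (Theorem~2.2): the characteristic subgroup $D_1$ of \cite[Theorem~2.1]{BGH} is replaced by $D=[D_1,O^p(\Aut_\F(P))]\leq\h(\F)$, and the key verification that every nontrivial $p'$-automorphism still acts nontrivially on $D$ uses the coprime-action identity $[D_1,\phi]=[D_1,\phi,\phi]\leq[D,\phi]$. That verification, together with the invocation of the Main Lemma and the $A\times B$ lemma, is exactly the ``bookkeeping'' you defer to \cite{BGH}; it is not merely bookkeeping, and your sketch as written would not assemble into a proof without it.
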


If one replaces $\h(\F)$ by $S$, then the above theorem coincides with \cite[Theorem~B]{BGH}. We recall  that the hyperfocal subgroup $\h(\F)$ is the subgroup of $S$ generated by all elements of the form $x^{-1}\phi(x)$ where $x\in Q$ and $\phi\in O^p(\Aut_\F(Q))$ for some subgroup $Q$ of $S$. If $\F=\F_S(G)$ is the fusion system of a finite group $G$ with Sylow $p$-subgroup $S$, then Puig's hyperfocal subgroup theorem \cite[\S 1.1]{P5} states that $\h(\F)=O^p(G)\cap S$. In the situation of Theorem~\ref{MainThm}, Quillen's example $Q_8\leq Q_8:C_3$ shows that it is indeed not enough to consider only elementary abelian subgroups for $p=2$.

\smallskip

A fusion system $\F$ on $S$ is called nilpotent if $\F=\F_S(S)$. Restricting attention to subgroups of the hyperfocal subgroup is motivated by a theorem of the second author of this paper together with Zhang, which characterizes $p$-nilpotency of a saturated fusion system $\F$ by the fusion on certain subgroups of the hyperfocal subgroup of $\F$; see \cite{LZ}. Another motivation comes from work of Ballester-Bolinches, Ezquerro, Su and Wang \cite{BESW} showing that, in certain special cases, fusion is detected on the subgroups of the focal subgroup of $\F$ which are cyclic of order $p$ or $4$. We show here that in Theorem A and C of \cite{BESW}, the focal subgroup can actually be replaced by the hyperfocal subgroup. More precisely, we prove the following theorem which gives in particular a new characterization of nilpotent fusion systems:

\begin{Th}\label{ThmBESW}
Let $\F$ be a saturated fusion system over a
finite $p$-group $S$, and let $\G=N_\F(S)$ or $\G=\F_S(S)$.  
Suppose that $\Hom_\G(A,B)=\Hom_\F(A,B)$  for all $A,B \leq \h(\F)$ which are 
cyclic subgroups of order $p$ or $4$. Then $\G=\F$. 
\end{Th}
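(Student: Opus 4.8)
The plan is to deduce Theorem~\ref{ThmBESW} from Theorem~\ref{MainThm}. In both cases $\G\subseteq\F$ and $S$ is normal in $\G$, so that every $\G$-morphism between subgroups of $S$ is the restriction of an element of $\Aut_\G(S)$, where $\Aut_\G(S)=\Aut_\F(S)$ if $\G=N_\F(S)$ and $\Aut_\G(S)=\Inn(S)$ if $\G=\F_S(S)$. Since $\G\subseteq\F$ is automatic, it suffices to verify the hypothesis of Theorem~\ref{MainThm}, namely that $\Hom_\G(A,B)=\Hom_\F(A,B)$ for all $A,B\leq\h(\F)$ which are elementary abelian if $p$ is odd and abelian of exponent at most~$4$ if $p=2$. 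Thus the only real work is to upgrade the given control of fusion on cyclic subgroups of order $p$ or $4$ inside $\h(\F)$ to control on all abelian subgroups of small exponent inside $\h(\F)$; note that when $\G=\F_S(S)$ the equality $\Aut_\F(S)=\Inn(S)$ is then part of the conclusion of Theorem~\ref{MainThm}, not a hypothesis we must check separately.

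For this upgrade I would first use that $\h(\F)$ is strongly $\F$-closed: given abelian $A\leq\h(\F)$ of small exponent, we may replace it by a fully $\F$-normalized conjugate still contained in $\h(\F)$, hence in particular receptive in $\F$. Given $\phi\in\Hom_\F(A,B)$ with $B\leq\h(\F)$, the hypothesis supplies, for each cyclic $X\leq A$ of order $p$ or $4$, an element $\beta_X\in\Aut_\G(S)$ with $\beta_X|_X=\phi|_X$; since $A$ is generated by such subgroups $X$, the point is that these local extensions can be chosen coherently, so that they glue to a single element of $\Aut_\G(S)$ restricting to $\phi$. I would attack this by combining Alperin's fusion theorem for $\F$ with the saturation axioms, decomposing $\phi$ into restrictions of automorphisms of essential subgroups and of $S$ --- all intermediate subgroups staying inside the strongly closed $\h(\F)$ --- and thereby reducing to the case of a single automorphism $\psi$ of an essential subgroup or of $S$. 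When $\psi\in\Aut_\F(S)$ and $\G=N_\F(S)$ this is immediate, since $\psi\in\Aut_\G(S)$; otherwise one isolates the part of $\psi$ of order prime to $p$ --- using that $\Inn(S)$ is a Sylow $p$-subgroup of $\Aut_\F(S)$, and in the essential case analysing $\Out_\F(R)$ --- and invokes the classical fact that an automorphism of order prime to $p$ of a finite $p$-group that centralizes $\Omega_1$ (for $p$ odd), respectively $\Omega_2$ (for $p=2$), is trivial. This fact is exactly what forces the exponents $p$ and $4$, and its failure for $Q_8$ is reflected in Quillen's example $Q_8\leq Q_8:C_3$ recalled after Theorem~\ref{MainThm}. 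When $\G=\F_S(S)$ the argument is completed using the standard coprime-action identity $[S,\theta]=[S,\theta,\theta]$: a coprime $\theta\in O^p(\Aut_\F(S))$ satisfies $[S,\theta]\leq\h(\F)$ by definition of the hyperfocal subgroup, and once the hypothesis and the classical fact force $\theta$ to act trivially on $\h(\F)$ one obtains $[S,\theta]=[S,\theta,\theta]=1$, hence $\theta=1$.

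The hard part will be the coherence/gluing step. The cyclic hypothesis only controls $\phi$ one cyclic subgroup at a time, while the classical automorphism fact concerns $\Omega_1(\h(\F))$ (or $\Omega_2(\h(\F))$), which, $\h(\F)$ being in general non-abelian, is itself a proper and non-abelian subgroup; passing from ``$\psi$ matches a $\G$-morphism on each small cyclic subgroup of $\h(\F)$, compatibly'' to ``$\psi$ acts trivially on $\Omega_1(\h(\F))$'' is where the saturation axioms --- fully normalized subgroups, receptivity, and Alperin's theorem --- and, very possibly, features specific to $N_\F(S)$ and $\F_S(S)$ beyond the mere normality of $S$, must be used with care. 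Everything else is bookkeeping with the definitions of $N_\F(S)$, $\F_S(S)$, and $\h(\F)$, together with the appeal to Theorem~\ref{MainThm}.
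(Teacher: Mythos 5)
There is a genuine gap, and it sits exactly where you flag it: the ``coherence/gluing step'' is not a technical detail to be filled in later but the entire content of the theorem, and your proposal contains no argument for it. Worse, the specific mechanism you propose for closing it --- forcing a $p'$-automorphism $\psi$ of an essential subgroup (or of $S$) to centralize $\Omega_1(\h(\F))$, resp.\ $\Omega_2(\h(\F))$ --- is not what the hypothesis delivers. For $\G=N_\F(S)$ the hypothesis only says that each restriction $\psi|_{\langle x\rangle}$ extends to \emph{some} element of $\Aut_\F(S)$, which has no reason to be trivial, so $\psi$ centralizes nothing; for $\G=\F_S(S)$ it says $\psi|_{\langle x\rangle}$ is conjugation by some element $s_x\in S$ depending on $x$, which again does not make $\psi$ trivial on any subgroup. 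Moreover, the example in the introduction (with $S$ elementary abelian of order $p^n$) shows that control of fusion on cyclic subgroups of order $p$ does \emph{not} formally upgrade to control on all elementary abelian subgroups of $\h(\F)$; any successful argument must exploit the specific shape of $N_\F(S)$ and $\F_S(S)$, and your proposal never says how. Your reduction to Theorem~\ref{MainThm} is therefore not a reduction at all: for $\G=N_\F(S)$, verifying the hypothesis of Theorem~\ref{MainThm} is essentially equivalent to the conclusion $\F=N_\F(S)$ you are trying to prove.

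The paper closes this gap by a different device and does not route through Theorem~\ref{MainThm}. Given an $\F$-essential subgroup $Q$, Theorem~\ref{ThompsonThm} produces an $\Aut_\F(Q)$-invariant $D\leq [Q,O^p(\Aut_\F(Q))]\leq\h(\F)$ of exponent $p$ (resp.\ at most $4$) on which every nontrivial $p'$-automorphism in $\Aut_\F(Q)$ acts nontrivially. One then filters $D$ by $D_i=D\cap Z_i(S)$. The cyclic hypothesis is used only to show this series is $\Aut_\F(Q)$-invariant: for $x\in D_i$, the map $\phi|_{\langle x\rangle}$ extends to an element of $\Aut_\F(S)$, which preserves $Z_i(S)$, so $\phi(x)\in D\cap Z_i(S)=D_i$. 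The stabilizer of the series is then a normal subgroup of $\Aut_\F(Q)$ which is a $p$-group (by \cite[Theorem~5.3.2]{G} together with the detection property of $D$) and which contains $\Aut_S(Q)$; since $\Aut_S(Q)\in\Syl_p(\Aut_\F(Q))$ this forces $O_p(\Aut_\F(Q))=\Aut_S(Q)>\Inn(Q)$, contradicting radicality of $Q$. Hence there are no essential subgroups and $\F=N_\F(S)$ by Alperin's fusion theorem; for $\G=\F_S(S)$ the same filtration applied to $S$ itself shows $\Aut_\F(S)$ stabilizes the series (here one checks $\theta(x)x^{-1}=[s_x,x]\in D\cap Z_{i-1}(S)$), so $\Aut_\F(S)$ is a $p$-group and equals $\Inn(S)$. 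This filtration-by-$Z_i(S)$ argument is precisely the missing idea that converts ``control one cyclic subgroup at a time'' into a global conclusion; without it, or a substitute for it, your outline does not constitute a proof.
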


We remark that, in general, it is not the case that the subgroup $H$ controls fusion in $G$ if it controls fusion on cyclic subgroups of order $p$ for odd $p$, or on subgroups of order at most $4$ for $p=2$. This is not even the case if $G$ has a normal Sylow $p$-subgroup as the following example shows: Let $n$ be an integer such that $n\geq 2$ and $p$ does not divide $n$. Let $S$ be the field of order $p^n$, so that $S$ under addition forms in particular an elementary abelian group of order $p^n$. Note that every non-zero element of $S$ induces a group automorphism of $S$ via multiplication. Let $D$ be the group of all these automorphisms. Then $D$ is a subgroup of $\Aut(S)$ of order $p^n-1$ acting freely and transitively on the non-trivial elements of $S$. Let $\sigma$ be the Frobenius automorphism of the field $S$. Then $\sigma$ has order $n$ and is also a group automorphism of $S$. Moreover, $\sigma$ normalizes $D$, as conjugation by $\sigma$ takes every element of $D$ to its $p$th power. Hence, $\hat{D}=D\rtimes\<\sigma\>$ is a group of order $(p^n-1)n$. Since $p$ does not divide $n$, it follows that $S$ is a normal Sylow $p$-subgroup of $G:=S\rtimes \hat{D}$. Moreover, $H:=S\rtimes D$ is a subgroup of $G$ of index prime to $p$. Note also that $S=[S,D]=\h(\F_S(G))$. Let $\m{V}$ be the set of subgroups of $S$ of order $p$. Then $\m{V}$ has $\frac{p^n-1}{p-1}$ elements. As $D$ acts freely and transitively on the non-trivial elements of $S$, it follows that $D$ acts also transitively on $\m{V}$, and that $C_D(A)=1$ for all $A\in\m{V}$. Thus $|\Aut_D(A)|=|N_D(A)|=\frac{|D|}{|\m{V}|}=p-1$ for every $A\in\m{V}$. As any two elements of $\m{V}$ are conjugate under $D$, it follows $|\Hom_D(A,B)|=p-1$ for all $A,B\in\m{V}$. Thus, $\Hom_H(A,B)=\Hom_D(A,B)$ is the set $\Inj(A,B)$ of injective group homomorphism from $A$ to $B$. As $\Hom_H(A,B)\subseteq\Hom_G(A,B)\subseteq\Inj(A,B)$, this implies $\Hom_H(A,B)=\Hom_G(A,B)$ for all $A,B\in\m{V}$.  
So $H$ controls fusion in $G$ of the cyclic subgroups of order $p$ (and thus for $p=2$ also of the cyclic subgroups of order at most $4$). However, as $D\neq \hat{D}$, the subgroup $H$ does not control fusion in $G$. 

\smallskip

We conclude by stating a version of Theorem~\ref{MainThm} in terms of varieties of cohomology rings. We continue to assume that $G$ is a finite group and we fix moreover an algebraically closed field $\Omega$ of prime characteristic $p$. We either set $k=\Omega$ or $k=\mathbb{F}_p$. Moreover, set  $H^\ast(G):=H^\ast(G,k)$ and define the variety $V_G$ to be the variety $\Hom_k(H^\ast(G),\Omega)$ of $k$-algebra homomorphisms from $H^\ast(G)$ to $\Omega$; see Remark~\ref{EvenDegreeRemark} for alternative definitions of $V_G$. Then every $k$-algebra homomorphism $\alpha\colon H^\ast(G)\rightarrow H^\ast(H)$ induces a map of varieties $\alpha^*\colon V_H\rightarrow V_G$ by sending any homomorphism $\beta\in V_H=\Hom_k(H^\ast(H),\Omega)$ to $\beta\circ\alpha\in V_G=\Hom_k(H^\ast(G),\Omega)$. For an arbitrary subgroup $H$ of $G$, we write 
$\res_{G,H}: H^{\ast}(G)\longrightarrow H^{\ast}(H)$
for the map induced by the inclusion map $H\rightarrow G$, and hence  
$\res^{\ast}_{G,H}: V_H\longrightarrow V_G$ for the corresponding map of varieties. 

\smallskip

If $H$ is a subgroup of $G$ containing a Sylow $p$-subgroup $S$ of $G$, then we have the inclusion maps $S\cap O^p(G)\hookrightarrow H\hookrightarrow G$ which induce the following maps of varieties:
\[ V_{S\cap O^p(G)}\xrightarrow{\;\res_{H,S\cap O^p(G)\;}^\ast} V_H\xrightarrow{\;\;\;\;\res_{G,H}^\ast\;\;\;\;}V_G\]
So in particular, we can consider the restriction of the map $\res_{G,H}^\ast\colon V_H\rightarrow V_G$ to the subvariety $\res_{H,S\cap O^p(G)}^{\ast}V_{S\cap O^p(G)}$ of $V_H$. If $p$ is an odd prime and $H$ is a subgroup of $G$ of index prime to $p$, then the results in \cite{BGH} say basically that $H$ controls fusion in $G$ if $\res_{G,H}^\ast\colon V_H\rightarrow V_G$ is an isomorphism of varieties. Theorem~\ref{MainThm} implies a slightly stronger statement which is stated in the next theorem. Notice that a subgroup $H$ of $G$ has index prime to $p$ if and only if $H$ contains a Sylow $p$-subgroup of $G$.

\begin{Th}\label{App}
Let $G$ be a finite group, let $p$ be an odd prime, and let  $H$ be a subgroup of $G$ containing a Sylow $p$-subgroup $S$ of $G$. Suppose the restriction of the map $\res_{G,H}^{\ast}$ to $\res^{\ast}_{H,S\cap O^{p}(G)}V_{S\cap O^{p}(G)}$ is injective. Then $H$ controls fusion in $G$ and the restriction map $\res_{G,H}: H^{\ast}(G)\longrightarrow H^{\ast}(H)$ is an isomorphism.
\end{Th}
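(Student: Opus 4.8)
The plan is to deduce Theorem~\ref{App} from Theorem~\ref{MainThm} via Quillen's stratification theorem. Put $\F=\F_S(G)$ and $\G=\F_S(H)$. Since $S\le H$ and $[G:H]$ is prime to $p$, the index $[H:S]$ is prime to $p$ too, so $S$ is a Sylow $p$-subgroup of $H$; hence $\G$ and $\F$ are saturated fusion systems over the same group $S$, with $\G\subseteq\F$. By Puig's hyperfocal subgroup theorem $\h(\F)=O^p(G)\cap S=:P$. Because $p$ is odd, Theorem~\ref{MainThm} reduces the assertion ``$H$ controls fusion in $G$'' (from which the isomorphism $\res_{G,H}\colon H^*(G)\to H^*(H)$ follows by the Cartan--Eilenberg stable elements formula) to showing that $\Hom_H(A,B)=\Hom_G(A,B)$ for all elementary abelian subgroups $A,B\le P$; note also that $\G=\F$ does give control of fusion of \emph{all} $p$-subgroups of $H$, since each such subgroup is $H$-conjugate into $S$.

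To prove that fusion equality I would run the standard Quillen argument from \cite{BGH}, keeping track of which subgroups appear. For elementary abelian $A\le P$, the chain $A\le P\le H\le G$ gives $\res^*_{G,A}=\res^*_{G,H}\circ\res^*_{H,A}$ and $\res^*_{H,A}=\res^*_{H,P}\circ\res^*_{P,A}$, so $\res^*_{H,A}(V_A)\subseteq\res^*_{H,P}(V_P)=\res^*_{H,S\cap O^p(G)}V_{S\cap O^p(G)}$; hence by hypothesis $\res^*_{G,H}$ is injective on $\res^*_{H,A}(V_A)\cup\res^*_{H,A'}(V_{A'})$ for any two elementary abelian $A,A'\le P$. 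Recall that $V_A\cong\mathbb{A}^{\operatorname{rk}A}$ carries a faithful $\mathbb{F}_p$-linear action of $\Aut(A)$; write $V_A^+$ for the complement of the (finitely many) images of $\res^*_{A,A_0}$ over proper $A_0<A$, a dense open set which, since the fixed locus of any nontrivial $\alpha\in\Aut(A)$ is a proper $\mathbb{F}_p$-rational linear subspace and hence contained in one of those images, is acted on freely by $\Aut(A)$. Quillen's stratification theorem (applied over $\Omega$, with the usual $F$-isomorphism caveat if $k=\mathbb{F}_p$) identifies $\res^*_{G,A}$ on $V_A^+$ with the quotient $V_A^+\to V_A^+/W_G(A)$ followed by an isomorphism onto a locally closed subset of $V_G$, where $W_G(A)=N_G(A)/C_G(A)=\Aut_G(A)$, and states that $\res^*_{G,A}(V_A^+)$ and $\res^*_{G,A'}(V_{A'}^+)$ coincide precisely when $A,A'$ are $G$-conjugate; likewise with $H$ in place of $G$.

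Now let $A,A'\le P$ be elementary abelian and $G$-conjugate, pick $x\in V_A^+$, and let $x'\in V_{A'}^+$ be its image under the map of varieties induced by a conjugation isomorphism, so $\res^*_{G,A}(x)=\res^*_{G,A'}(x')$. Rewriting both sides through $\res^*_{G,H}$ and invoking injectivity of $\res^*_{G,H}$ on $\res^*_{H,A}(V_A)\cup\res^*_{H,A'}(V_{A'})$ yields $\res^*_{H,A}(x)=\res^*_{H,A'}(x')$ in $V_H$, whence Quillen's stratification of $V_H$ forces $A$ and $A'$ to be $H$-conjugate. Taking $A'=A$ and letting $x$ vary, the same injectivity shows that the natural surjection $V_A^+/W_H(A)\to V_A^+/W_G(A)$ is a bijection, and since $W_H(A)\le W_G(A)$ act freely on $V_A^+$ this gives $\Aut_H(A)=\Aut_G(A)$. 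Finally, for $\phi\in\Hom_G(A,B)$ with $A,B\le P$ elementary abelian, the subgroup $\phi(A)\le B\le P$ is $G$-conjugate, hence $H$-conjugate, to $A$, say $\phi(A)=A^h$ with $h\in H$; then $c_h^{-1}\circ\phi\in\Aut_G(A)=\Aut_H(A)$, so $\phi\in\Hom_H(A,\phi(A))\subseteq\Hom_H(A,B)$. The reverse inclusion being trivial, $\Hom_H(A,B)=\Hom_G(A,B)$ for all elementary abelian $A,B\le\h(\F)$, and Theorem~\ref{MainThm} then yields $\G=\F$, completing the argument.

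The main obstacle is precisely that the hypothesis controls $\res^*_{G,H}$ only on the subvariety $\res^*_{H,S\cap O^p(G)}V_{S\cap O^p(G)}$ and not on all of $V_H$: one must argue that this restricted information still recovers the $G$- and $H$-conjugacy relations among the elementary abelian subgroups of the hyperfocal subgroup. That it does rests on the ``locality'' of Quillen's stratification noted above --- every stratum associated to an elementary abelian $A\le P$ lies inside the image of $\res^*_{H,P}(V_P)$ under $\res^*_{G,H}$, because $V_A^+$ maps into $V_P$ through $\res^*_{P,A}$ --- together with Puig's identification $\h(\F)=S\cap O^p(G)$, which is what makes the relevant subvariety the one appearing in the statement. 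The remaining points (exact conjugation conventions in the stratification, freeness of the $\Aut(A)$-action on $V_A^+$, and translating $\F=\G$ back into control of fusion of all $p$-subgroups of $H$) are routine.
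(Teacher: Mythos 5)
Your proposal is correct and follows essentially the same route as the paper: use Quillen's stratification together with the injectivity hypothesis (which applies because each stratum for $A\le S\cap O^p(G)$ lands in $\res^*_{H,S\cap O^p(G)}V_{S\cap O^p(G)}$) to show that $G$-conjugacy of elementary abelian subgroups of $S\cap O^p(G)$ implies $H$-conjugacy and that $\Aut_G(A)=\Aut_H(A)$, then invoke Puig's identification $\h(\F_S(G))=S\cap O^p(G)$, Theorem~\ref{MainThm}, and the Cartan--Eilenberg stable elements formula. The only cosmetic difference is that you argue pointwise on $V_A^+$ where the paper phrases the same steps in terms of the sets $V_{H,A}^+$ and $V_{G,A}^+$.
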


Note that Theorem~\ref{App} says in particular that the map $\res_{G,H}^\ast\colon V_H\rightarrow V_G$ is an isomorphism of varieties if its restriction to $\res^{\ast}_{H,S\cap O^{p}(G)}V_{S\cap O^{p}(G)}$ is injective. One sees easily that the converse of Theorem \ref{App} holds as well: If  $\res_{G,H}: H^{\ast}(G)\longrightarrow H^{\ast}(H)$ is an isomorphism then $\res_{G,H}^{\ast}: V_{H}\longrightarrow V_{G}$ is an isomorphism. In particular, the restriction of $\res_{G,H}^\ast$ to $\res_{H,S\cap O^p(G)}V_{S\cap O^p(G)}$ is injective.

\smallskip

We remark also that a theorem analogous to Theorem~\ref{App} can be proved for saturated fusion systems rather than for groups. For more details, we refer the reader to Remark~\ref{FusionSystemCohomology}.

\subsection*{Acknowledgement} The authors would like to thank Dave Benson for his patient explanations regarding group cohomology.

\section{Proof of Theorem~\ref{MainThm} and Theorem~\ref{ThmBESW}}

The proof of Theorem~\ref{MainThm} is very similar to the proof of Theorem~B in \cite{BGH}. We need the following variation of \cite[Theorem~2.1]{BGH}.

\begin{theorem}\label{ThompsonThm}
 Let $P$ be a finite $p$-group and let $G$ be a subgroup of $\Aut(P)$ containing the group $\Inn(P)$ of inner automorphism. Then there exists a $G$-invariant subgroup $D$ of $[P,O^p(G)]$, of exponent $p$ if $p$ is odd and exponent at most $4$ if $p=2$, such that $[D,P]\leq Z(D)$, and such that every non-trivial $p^\prime$-automorphism in $G$ restricts to a non-trivial $p^\prime$-automorphism of $D$. Furthermore, for any such $D$ and any maximal (with respect to inclusion) abelian subgroup $A$ of $D$ it follows that $A\unlhd P$ and $C_G(A)$ is a $p$-group.
\end{theorem}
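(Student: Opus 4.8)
The plan is to derive the existence of $D$ from the corresponding statement without the hyperfocal constraint --- namely \cite[Theorem~2.1]{BGH}, i.e.\ the present theorem with the phrase ``of $[P,O^p(G)]$'' and the final sentence deleted --- by intersecting with a suitable $G$-invariant subgroup, and then to prove the ``Furthermore'' assertion by a self-contained coprime action argument valid for \emph{any} subgroup $D$ with the listed properties. Write $Q:=[P,O^p(G)]$; since $O^p(G)\unlhd G$ the subgroup $Q$ is $G$-invariant, and since $\Inn(P)\leq G$ it is normal in $P$. Two elementary observations are used repeatedly. First, $G/O^p(G)$ is a $p$-group, so every $p^\prime$-element of $G$ lies in $O^p(G)$; hence $[P,\alpha]\leq[P,O^p(G)]=Q$ for every $p^\prime$-automorphism $\alpha\in G$. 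Second, a cyclic $p^\prime$-group $\langle\alpha\rangle$ acting on a $p$-group $X$ satisfies $[X,\alpha,\alpha]=[X,\alpha]$; in particular, if $\alpha$ centralizes $[X,\alpha]$ then $\alpha$ centralizes $X$.

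\emph{Existence of $D$.} Let $D_0\leq P$ be as provided by \cite[Theorem~2.1]{BGH}: $G$-invariant, of exponent $p$ for odd $p$ and of exponent at most $4$ for $p=2$, with $[D_0,P]\leq Z(D_0)$, and such that every non-trivial $p^\prime$-automorphism in $G$ restricts non-trivially to $D_0$. Set $D:=D_0\cap Q$. Then $D$ is $G$-invariant and lies in $Q$, and inherits the exponent bound from $D_0$. For the commutator condition, $[D,P]\leq[D_0,P]\leq Z(D_0)$ and $[D,P]\leq[Q,P]\leq Q$, so $[D,P]\leq Z(D_0)\cap Q$; any element of $Z(D_0)\cap Q$ lies in $D_0\cap Q=D$ and centralizes $D$, hence lies in $Z(D)$. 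Finally, for a non-trivial $p^\prime$-automorphism $\alpha\in G$, the first observation gives $[D_0,\alpha]\leq[P,\alpha]\leq Q$, and $[D_0,\alpha]\leq D_0$, so $1\neq[D_0,\alpha]\leq D$; were $\alpha$ to centralize $D$ it would centralize $[D_0,\alpha]\leq D$, forcing $[D_0,\alpha]=1$ by the second observation, a contradiction. Hence $\alpha$ restricts non-trivially to $D$.

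\emph{The ``Furthermore'' clause.} Let $D$ be any $G$-invariant subgroup of $Q$ with the listed properties, and let $A$ be a maximal abelian subgroup of $D$. If $d\in D$ centralizes $A$ then $\langle A,d\rangle$ is abelian, so maximality yields $C_D(A)=A$ and therefore $Z(D)\leq A$. Now $[A,P]\leq[D,P]\leq Z(D)\leq A$ gives $A\unlhd P$, and also $[A,D]\leq[D,P]\leq Z(D)$. Suppose $C_G(A)$ contains a non-trivial $p^\prime$-element $\beta$. For $d\in D$ and $a\in A$ we have $[\beta(d),a]=\beta([d,a])=[d,a]$, using that $\beta$ fixes $A$ pointwise and that $[d,a]\in Z(D)\leq A$; this forces $\beta(d)d^{-1}\in C_D(A)=A$, so $\beta$ fixes $D/A$ pointwise, i.e.\ $[D,\beta]\leq A$. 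Since $\beta$ centralizes $A$ we then get $[D,\beta,\beta]=1$, hence $[D,\beta]=1$ by the second observation, so $\beta$ restricts trivially to $D$ --- contradicting the hypothesis on $D$. Thus $C_G(A)$ has no non-trivial $p^\prime$-element and is a $p$-group.

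The only step concealing genuine difficulty is the input \cite[Theorem~2.1]{BGH}; relative to it everything above is formal, the sole subtlety being that the conditions $[D,P]\leq Z(D)$ and faithfulness of the $p^\prime$-action must be verified to survive intersection with $Q$ --- which is exactly what the two observations are designed to provide. (In particular, the reason exponent $4$ rather than $2$ is needed when $p=2$ lies inside the proof of \cite[Theorem~2.1]{BGH}, mirroring the $Q_8$ example in the introduction.)
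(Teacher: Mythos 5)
Your proof is correct and follows essentially the same route as the paper: both reduce to \cite[Theorem~2.1]{BGH} and then cut the resulting subgroup down into $[P,O^p(G)]$, using the coprime commutator identity $[X,\alpha,\alpha]=[X,\alpha]$ to see that faithfulness of the $p'$-action survives. The only local differences are that the paper takes $D=[D_1,O^p(G)]$ rather than your $D_0\cap[P,O^p(G)]$ (both work, the former being contained in the latter), and that for the final clause the paper simply invokes Thompson's $A\times B$ lemma where you reprove the relevant special case by hand via $[D,\beta]\leq C_D(A)=A$ followed by $[D,\beta,\beta]=1$.
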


\begin{proof}
 By \cite[Theorem~2.1]{BGH}, there exists a characteristic subgroup $D_1$ of $P$, of exponent $p$ if $p$ is odd and exponent at most $4$ if $p=2$, such that $[D_1,P]\leq Z(D_1)$, and such that every non-trivial $p^\prime$-automorphism of $P$ restricts to a non-trivial $p^\prime$-automorphism of $D_1$. Set $D:=[D_1,O^p(G)]$. As $D_1$ is $G$-invariant and as $O^p(G)$ is normal in $G$, the subgroup $D$ is $G$-invariant. In particular, as $\Inn(P)\leq G$ by assumption, we have $[D,P]\leq D$. Using $[D_1,P]\leq Z(D_1)$ we obtain thus $[D,P]\leq [D_1,P]\cap D\leq Z(D_1)\cap D\leq Z(D)$. If $\phi$ is a $p^\prime$-automorphism of $P$ with $\phi|_D=\Id_D$ then $[D,\phi]=1$ and $[D_1,\phi]\leq [D_1,O^p(G)]=D$. Thus, by \cite[Theorem~5.3.6]{G}, we have $[D_1,\phi]=[D_1,\phi,\phi]\leq [D,\phi]=1$ and $\phi|_{D_1}=\Id_{D_1}$. Because of the way $D_1$ was chosen, this implies that $\phi=\Id_P$. So we have shown that every non-trivial $p^\prime$-automorphism in $G$ restricts to a non-trivial automorphism of $D$. 

\smallskip

For the last part let $A$ be a maximal subgroup of $D$ with respect to inclusion. Then $[A,P]\leq Z(D)\leq A$ and thus $A\unlhd P$. Furthermore, if $B\leq C_G(A)$ is a $p^\prime$-subgroup, then $A\times B$ acts on $D$. Since $A$ is maximal abelian, it follows $C_D(A)=A\leq C_D(B)$. Thompson's $A\times B$-lemma \cite[Theorem~5.3.4]{G} now says that $[D,B]=1$ and so $B=1$. Since $B$ was arbitrary, it follows that $C_G(A)$ is a $p$-group.   
\end{proof}

We need the following crucial lemma, which is \cite[Main Lemma~2.4]{BGH}. 

\begin{lemma}\label{MainLemma}
 Let $\G\subseteq\F$ be two saturated fusion systems on the same finite $p$-group $S$, and $P\leq S$ an $\F$-centric and fully $\F$-normalized subgroup, with $\Aut_\F(R)=\Aut_\G(R)$ for every $P< R\leq N_S(P)$. Suppose that there exists a subgroup $Q\unlhd P$ with $\Hom_\F(Q,S)=\Hom_\G(Q,S)$. Then $\Aut_\F(P)=\<\Aut_\G(P),C_{\Aut_\F(P)}(Q)\>$.
\end{lemma}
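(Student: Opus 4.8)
Write $G=\Aut_\F(P)$. Since $\G\subseteq\F$ are fusion systems over the same group $S$, we have $\Aut_S(P)\le\Aut_\G(P)\le G$, and since $P$ is fully $\F$-normalized and $\F$-centric, $\Aut_S(P)\in\Syl_p(G)$. Hence $N:=\langle\Aut_\G(P),C_G(Q)\rangle$ already contains a Sylow $p$-subgroup of $G$, so it suffices to prove $O^p(G)\le N$; as $O^p(G)$ is generated by the $p'$-elements of $G$, it is enough to show that every $p'$-element $\phi$ of $G$ lies in $N$. I would establish the stronger statement that $\phi|_Q$ extends to some $\gamma\in\Aut_\G(P)$: then $\gamma^{-1}\phi$ restricts to the identity on $Q$, so $\phi=\gamma\cdot(\gamma^{-1}\phi)\in\Aut_\G(P)\cdot C_G(Q)\subseteq N$. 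Note that $\phi|_Q$ is at any rate a morphism in $\G$, by the hypothesis $\Hom_\F(Q,S)=\Hom_\G(Q,S)$; the substance is to extend it over $P$ while staying inside $\G$, and for this one uses the hypothesis $\Aut_\F(R)=\Aut_\G(R)$ for $P<R\le N_S(P)$ together with the saturation of $\G$.

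The extension statement I would prove by induction (on $|S|$, say, allowing the ambient fusion system to shrink along the way), with $P=S$ as the base case. When $P=S$: then $\phi(Q)\unlhd S$ is fully $\G$-normalized with $N_S(\phi(Q))=S$, and $(\phi|_Q)\circ c_x\circ(\phi|_Q)^{-1}=c_{\phi(x)}|_{\phi(Q)}\in\Aut_S(\phi(Q))$ for every $x\in S$; hence the extension axiom for $\G$, applied to the $\G$-isomorphism $\phi|_Q\colon Q\iso\phi(Q)$, extends it to an element of $\Hom_\G(S,S)=\Aut_\G(S)$. When $P<S$: then $P<N_S(P)$, and since $P$ is $\F$-centric and fully $\F$-normalized --- hence also $\G$-centric and fully $\G$-normalized, because $\G$-conjugacy refines $\F$-conjugacy --- $N_\G(P)\subseteq N_\F(P)$ are saturated constrained fusion systems over $N_S(P)$ with $P=O_p$ of both, and the hypothesis on automizers says precisely that they agree on automorphisms of every $R$ with $P<R\le N_S(P)$. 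In the finite groups $L_0\le L$ modelling them --- where $\Aut_\F(P)\cong L/Z(P)$ and $\Aut_\G(P)\cong L_0/Z(P)$ --- this reads $N_L(R)=N_{L_0}(R)\le L_0$ for every such $R$ (using $C_L(R)\le C_L(P)=Z(P)\le L_0$), while the extension statement becomes the claim that every $p'$-element of $L$ lies in $L_0\cdot C_L(Q)$. To prove the latter I would move $Q$ within $\G$ to a fully $\G$-normalized conjugate, extend $\phi|_Q$ as far as the extension axiom in $\G$ permits, and then push the resulting morphism up the normalizer chain of $P$, using the equalities $N_L(R)=N_{L_0}(R)$ (equivalently, Alperin fusion in $L$, whose $p$-radical subgroups all contain $O_p(L)=P$) together with the inductive hypothesis.

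The step I expect to be the main obstacle is this inductive step, and within it the passage from ``$\phi|_Q$ extends in $\G$ to a monomorphism $P\to S$'' to ``$\phi|_Q$ extends in $\G$ to an automorphism of $P$''. The extension axiom in $\G$ only produces a $\G$-isomorphism from $P$ onto some $\F$-conjugate $P'$ of $P$ lying in $N_S(\phi(Q))$, with $\phi(Q)$ normal in both $P$ and $P'$, and one has to arrange $P'=P$ --- equivalently, replace the correcting isomorphism $P'\to P$, which a priori lives only in $\F$ and fixes $\phi(Q)$ pointwise, by one in $\G$. This is exactly where the hypothesis $\Aut_\F(R)=\Aut_\G(R)$ for $P<R\le N_S(P)$ is indispensable: that isomorphism is to be assembled from automorphisms of overgroups of $P$, equivalently of $\phi(Q)$, on which $\F$ and $\G$ coincide. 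Weaving this together with the successive reductions to fully $\G$-normalized conjugates, and setting up the induction so that passing to $N_\F(P)$ is a legitimate step, is the technical heart of the argument.
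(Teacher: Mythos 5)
First, a point of reference: the paper does not prove this lemma at all --- it is imported verbatim as \cite[Main Lemma~2.4]{BGH} --- so there is no in-paper argument to measure your sketch against, and your proposal has to stand on its own. Its opening reductions are sound: since $P$ is fully $\F$-normalized, $\Aut_S(P)\in\Syl_p(\Aut_\F(P))$ and $\Aut_S(P)\le\Aut_\G(P)$, so it does suffice to show that each $p'$-element $\phi$ of $\Aut_\F(P)$ admits some $\gamma\in\Aut_\G(P)$ with $\gamma|_Q=\phi|_Q$; and your base case $P=S$ is correct, since $\phi(Q)\unlhd S$ is fully $\G$-normalized, hence receptive in $\G$, and $N_{\phi|_Q}=S$.

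The difficulty is that the substance of the lemma is exactly the step you defer, and the framework you propose for it does not work as stated. Concretely: (1) the passage from ``$\phi|_Q$ extends in $\G$ to a monomorphism $P\to S$'' to ``$\phi|_Q$ extends in $\G$ to an automorphism of $P$'' --- i.e.\ replacing the $\F$-isomorphism $P'\to P$ that fixes $\phi(Q)$ pointwise by a suitable $\G$-morphism --- is where the hypothesis $\Aut_\F(R)=\Aut_\G(R)$ for $P<R\le N_S(P)$ must do its work, and you only gesture at it. Note that $P$ and $P'$ are merely $\F$-conjugate overgroups of $\phi(Q)$ in $N_S(\phi(Q))$, so an Alperin-type decomposition of $P'\to P$ a priori passes through subgroups that need not contain $P$; one has to use receptivity of the fully $\F$-normalized subgroup $P$ to push the correcting isomorphism into $N_S(P)$ before the automizer hypothesis becomes applicable, and none of this is set up. (2) The induction you propose, replacing $\G\subseteq\F$ by $N_\G(P)\subseteq N_\F(P)$, does not inherit the hypothesis on $Q$: the equality $\Hom_\F(Q,S)=\Hom_\G(Q,S)$ does not yield $\Hom_{N_\F(P)}(Q,N_S(P))=\Hom_{N_\G(P)}(Q,N_S(P))$, because for a $\G$-morphism $Q\to N_S(P)$ to lie in $N_\G(P)$ it must extend within $\G$ to a morphism defined on $P$ and carrying $P$ to $P$ --- which is essentially the extension statement you are trying to prove. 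As set up, the reduction is therefore circular, and the model-theoretic translation to groups $L_0\le L$ also presupposes an embedding of models that itself needs justification. In short, the strategy has the right shape but the proof of the lemma is not there; you should consult and follow the actual argument for \cite[Main Lemma~2.4]{BGH}, which works directly in $\F$ and $\G$ with careful successive applications of the extension axiom rather than through normalizer subsystems.
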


\begin{proof}[Proof of Theorem~\ref{MainThm}]
By Alperin's fusion theorem \cite[Theorem~I.3.6]{AKO}, $\F$ is generated by $\F$-automorphisms of fully $\F$-normalized $\F$-centric subgroups. We want to show that $\Aut_\F(P)=\Aut_\G(P)$ for all $P\leq S$. By induction on $|S:P|$, we can assume that $\Aut_\F(R)=\Aut_\G(R)$ for all $R\leq S$ with $|R|>|P|$. Furthermore, by Alperin's fusion theorem, we can choose $P$ to be fully $\F$-normalized and $\F$-centric. By Theorem~\ref{ThompsonThm}, we can pick an $\Aut_\F(P)$-invariant subgroup $D$ of $[P,O^p(\Aut_\F(P))]$, of exponent $p$ if $p$ is odd and of exponent at most $4$ if $p=2$, such that every non-trivial $p^\prime$-automorphism $\phi\in\Aut_\F(P)$ restricts to a non-trivial automorphism of $D$ and, for any maximal (with respect to inclusion) abelian subgroup $A$ of $D$, $A\unlhd P$ and $C_{\Aut_\F(P)}(A)$ is a $p$-group. As $P$ is fully $\F$-normalized, $\Aut_S(P)$ is a Sylow $p$-subgroup of $\Aut_\F(P)$, and so if we replace $A$ by a conjugate of $A$ under $\Aut_\F(P)$, we can arrange that $C_{\Aut_\F(P)}(A)\leq \Aut_S(P)\leq \Aut_\G(P)$. As $D$ has exponent $p$ if $p$ is odd and exponent at most $4$ if $p=2$, we have by assumption $\Hom_\F(A,S)=\Hom_\G(A,S)$. So by Lemma~\ref{MainLemma} applied with $A$ in place of $Q$, we obtain that $\Aut_\F(P)=\<\Aut_\G(P),C_{\Aut_\F(P)}(A)\>=\Aut_\G(P)$ as wanted.
\end{proof}

Let $\mathcal{P}$ be a set of representatives  of the $\F$-conjugacy classes of $\F$-essential subgroups. A version of the Alperin--Goldschmidt Theorem for fusion systems states that $\F$ is generated by the $\F$-automorphism groups of the elements of $\mathcal{P}\cup\{S\}$. Analyzing what is used in the proof above, one sees that we only need the following condition in Theorem~\ref{MainThm}:   For every $P\in\m{P}\cup\{S\}$ and every abelian subgroup $A$ of the commutator subgroup $[P,O^p(\Aut_\F(P))]$ which is of exponent $p$ or $4$, we have $\Hom_\F(A,S)=\Hom_\G(A,S)$. 

\smallskip

The proof of Theorem~\ref{ThmBESW} is essentially the same as the one of \cite[Theorem A]{BESW}
except that we use Theorem~\ref{ThompsonThm} instead of \cite[Theorem~2.1]{BGH}. Essentially, Theorem~\ref{ThmBESW} is a consequence of the following lemma:

\begin{lemma}\label{LemmaBESW}
Let $\F$ be a saturated fusion systems over a finite $p$-group $S$. Suppose that 
$\Hom_\F(A,B)\subseteq \Hom_{N_\F(S)}(A,B)$  for all subgroups $A,B \leq \h(\F)$ which are
cyclic of order $p$ or $4$. Then $\F=N_\F(S)$.
\end{lemma}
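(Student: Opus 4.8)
The plan is to prove Lemma~\ref{LemmaBESW} by an induction on $|S|$, mimicking the proof of \cite[Theorem~A]{BESW} but feeding in Theorem~\ref{ThompsonThm} in place of \cite[Theorem~2.1]{BGH}. First I would invoke the Alperin--Goldschmidt fusion theorem for saturated fusion systems (as recalled just before the lemma): $\F$ is generated by $N_\F(S)$ together with the automorphism groups $\Aut_\F(P)$ of a set of representatives $P$ of the $\F$-conjugacy classes of $\F$-essential subgroups. Hence it suffices to show that for every $\F$-essential subgroup $P$, with $P$ chosen fully $\F$-normalized, one has $\Aut_\F(P)=\Aut_{N_\F(S)}(P)$; since an $\F$-essential subgroup is in particular $\F$-centric and $P<S$, an outer induction on $|S:P|$ lets me assume that the fusion has already been pinned down on all larger subgroups, i.e. $\Aut_\F(R)=\Aut_\G(R)$ for all $R$ with $P<R\le N_S(P)$, exactly the hypothesis needed later for Lemma~\ref{MainLemma} with $\G=N_\F(S)$.

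Next I would apply Theorem~\ref{ThompsonThm} with $G=\Aut_\F(P)$ (which contains $\Inn(P)$): this produces an $\Aut_\F(P)$-invariant subgroup $D\le[P,O^p(\Aut_\F(P))]$ of exponent $p$ (resp.\ at most $4$) with $[D,P]\le Z(D)$, on which every nontrivial $p'$-automorphism in $\Aut_\F(P)$ acts nontrivially, and such that every maximal abelian subgroup $A\le D$ satisfies $A\unlhd P$ and $C_{\Aut_\F(P)}(A)$ is a $p$-group. The key extra point needed beyond the proof of Theorem~\ref{MainThm} is that here one must control the fusion of $A$ using only \emph{cyclic} subgroups of order $p$ or $4$; this is where the structural constraints on $D$ are exploited. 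Because $[D,P]\le Z(D)$ and $A$ is a maximal abelian subgroup of $D$, I expect $A$ to be abelian of exponent $p$ or $4$ and, crucially, to lie inside $D\le[P,O^p(\Aut_\F(P))]\le\h(\F)$, so that each cyclic subgroup of $A$ is a cyclic subgroup of $\h(\F)$ of order $p$ or $4$ — and the hypothesis gives $\Hom_\F(C,S)=\Hom_\G(C,S)$ for all such $C$. Then, as in \cite{BESW}, the point is to upgrade control of fusion on the cyclic subgroups of $A$ to control of fusion on $A$ itself, i.e.\ to deduce $\Hom_\F(A,S)=\Hom_\G(A,S)$. Since $A$ is abelian of exponent $p$ or $4$, a homomorphism out of $A$ is not determined by its restrictions to cyclic subgroups alone, so one argues instead that any $\F$-map $\psi\colon A\to S$ can be composed with $N_\F(S)$-maps to normalize the situation, reducing to showing $\Aut_\F(A)=\Aut_{N_\F(S)}(A)$, and this automorphism statement does follow from the cyclic hypothesis because an automorphism of an abelian $p$-group of exponent $p$ or $4$ that preserves $\F$-fusion of each cyclic subgroup is already realized in $N_\F(S)$ — here one uses that $A\unlhd P$ together with saturation.

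Having established $\Hom_\F(A,S)=\Hom_\G(A,S)$ with $\G=N_\F(S)$ and $A\unlhd P$, I would feed $P$ (fully $\F$-normalized, $\F$-centric), the inductive hypothesis on larger subgroups, and $Q:=A$ into Lemma~\ref{MainLemma} to get $\Aut_\F(P)=\langle\Aut_\G(P),C_{\Aut_\F(P)}(A)\rangle$. Finally, using that $P$ is fully $\F$-normalized so that $\Aut_S(P)\in\Syl_p(\Aut_\F(P))$, I would replace $A$ by a suitable $\Aut_\F(P)$-conjugate so that the $p$-group $C_{\Aut_\F(P)}(A)$ lies inside $\Aut_S(P)\le\Aut_{N_\F(S)}(P)=\Aut_\G(P)$; this collapses the generating set to $\Aut_\F(P)=\Aut_\G(P)$. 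Running this for every essential $P$ and combining with $N_\F(S)\subseteq\G$ via Alperin--Goldschmidt yields $\F=N_\F(S)$.

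The main obstacle I anticipate is precisely the bootstrapping step: going from the hypothesis, which only controls fusion of \emph{cyclic} subgroups of $\h(\F)$ of order $p$ or $4$, to the statement $\Hom_\F(A,S)=\Hom_\G(A,S)$ for the full abelian group $A$ that Lemma~\ref{MainLemma} demands. In the proof of Theorem~\ref{MainThm} this was free because the hypothesis there directly concerned all abelian subgroups of $\h(\F)$ of exponent $p$ or $4$. Here one must genuinely use extra structure — that $A$ is normal in $P$, that $D$ is abelian-by-central with $[D,P]\le Z(D)$, and saturation — to promote cyclic control to control on $A$; getting this argument to work cleanly for $p=2$ (exponent up to $4$, where cyclic subgroups of order $2$ and $4$ interact more subtly) is the delicate part, and it is essentially the content borrowed from the argument in \cite{BESW}.
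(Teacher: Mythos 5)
There is a genuine gap at exactly the step you flag as ``delicate''. Your plan routes the argument through Lemma~\ref{MainLemma}, which requires the equality $\Hom_\F(A,S)=\Hom_{N_\F(S)}(A,S)$ for the full maximal abelian subgroup $A$ of $D$, whereas the hypothesis only controls fusion on \emph{cyclic} subgroups of order $p$ or $4$. Your proposed bootstrap --- that an $\F$-automorphism of an abelian $p$-group of exponent $p$ or $4$ which agrees with $N_\F(S)$-fusion on every cyclic subgroup is itself realized in $N_\F(S)$ --- is asserted, not proved, and as a general principle it is false: the example in the introduction of this paper (the additive group of $\mathbb{F}_{p^n}$ acted on by $\mathbb{F}_{p^n}^\times\rtimes\langle\sigma\rangle$) is constructed precisely to show that agreement of fusion on all cyclic subgroups of order $p$ does not force agreement on the elementary abelian group they generate. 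So no appeal to ``$A\unlhd P$ together with saturation'' can be expected to rescue this step without a substantially new idea, and your proof does not supply one.

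The paper's proof avoids the issue entirely by taking a different route that never needs control of fusion on $A$ as a whole. For an $\F$-essential $Q$ it takes the subgroup $D\leq [Q,O^p(\Aut_\F(Q))]\leq\h(\F)$ from Theorem~\ref{ThompsonThm} and sets $D_i=D\cap Z_i(S)$, where $Z_i(S)$ is the upper central series of $S$. The cyclic hypothesis is used only to show each $D_i$ is $\Aut_\F(Q)$-invariant: for $x\in D_i$ and $\phi\in\Aut_\F(Q)$, the restriction $\phi|_{\langle x\rangle}$ lies in $N_\F(S)$ by hypothesis, hence extends to an element of $\Aut_\F(S)$, which preserves $Z_i(S)$; so $\phi(x)\in Z_i(S)\cap D=D_i$. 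The stabilizer $H$ of the resulting series $1=D_0\leq\cdots\leq D_n=D$ is then a normal subgroup of $\Aut_\F(Q)$; any $p'$-element of $H$ acts trivially on $D$ by \cite[Theorem~5.3.2]{G} and hence is trivial by the choice of $D$, so $H\leq O_p(\Aut_\F(Q))$. Since $\Aut_S(Q)$ also stabilizes the series, this forces $\Aut_S(Q)=O_p(\Aut_\F(Q))$, contradicting the fact that essential subgroups are centric and radical. Hence there are no essential subgroups and $\F=N_\F(S)$ by Alperin's fusion theorem --- no induction on $|S:P|$ and no use of Lemma~\ref{MainLemma} is needed. You should rework your argument along these lines; the stabilizer-of-a-series mechanism is the actual content borrowed from \cite{BESW}, not a cyclic-to-abelian fusion upgrade.
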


\begin{proof}
Suppose that $Q$ is an $\F$-essential subgroup. Then by definition, $Q$ is in particular fully normalized and thus $\Aut_S(Q)$ is a Sylow $p$-subgroup of $\Aut_\F(Q)$. By Theorem \ref{ThompsonThm}, 
there is an $\Aut_{\F}(Q)$-invariant subgroup $D\leq [Q,O^p(\Aut_\F(Q))]\leq Q\cap\h(\F)$ such that every non-trivial $p^\prime$-element of $\Aut_{\F}(Q)$ restricts to a non-trivial automorphism of $D$, and $D$ is of exponent $p$ or $4$.
Let $Z_{i}(S)$ be the $i$-th center of $S$ and $D_{i}=D\cap Z_{i}(S)$.
We argue now that $D_{i}$ is $\Aut_{\F}(Q)$-invariant: For every $x\in D_i$ and any $\phi\in \Aut_\F(Q)$, $\phi|_{\<x\>}$ extends by hypothesis to an element of $\Aut_\F(S)$ which clearly normalizes $Z_i(S)$. As $\phi$ normalizes $D$, it follows $\phi(x)\in Z_i(S)\cap D=D_i$. So $D_i$ is indeed $\Aut_\F(Q)$-invariant. Thus, for some $n\in\mathbb{N}$, the series $1=D_{0}\leq D_{1}\leq \cdots\leq D_{n}=D$ is $\Aut_{\F}(Q)$-invariant. So the stabilizer $H$ of this series (i.e. the set of elements in $\Aut_{\F}(Q)$ acting trivially on $D_i/D_{i-1}$ for each $i\leq n$) forms a normal subgroup of $\Aut_\F(Q)$. For any $p^\prime$-element $\varphi$ of $H$, we have $\varphi|_{D}=\Id_{D}$ by \cite[Theorem 5.3.2]{G}, and thus $\varphi=\Id_{Q}$ by the choice of $D$. Therefore, the stabilizer $H$ is a $p$-group and so $H\leq O_{p}(\Aut_{\F}(Q))$. Since  $\Aut_{S}(Q)$ stabilizes the series $D_0\leq D_1\leq\cdots\leq D_n=D$, it follows that  $\Aut_{S}(Q)=O_{p}(\Aut_{\F}(Q))$, which is a contradiction as every $\F$-essential subgroup is centric and radical. Hence there is no $\F$-essential subgroup. Thus, $\F=N_{\F}(S)$ by Alperin's fusion theorem \cite[Theorem~I.3.6]{AKO}.
\end{proof}

\begin{proof}[Proof of Theorem~\ref{ThmBESW}]
Lemma~\ref{LemmaBESW}, $\F=N_\F(S)$. So for $\G=N_\F(S)$ the assertion follows immediately. Assume now $\G=\F_{S}(S)$.  As $\F=N_\F(S)$, it is sufficient to show that $\Aut_\F(S)=\Inn(S)$. By Theorem \ref{ThompsonThm}, 
there is an $\Aut_{\F}(S)$-invariant subgroup $D\leq [S,O^p(\Aut_\F(S))]\leq S\cap\h(\F)$ such that every non-trivial $p^\prime$-element of $\Aut_{\F}(S)$ restricts to a non-trivial automorphism of $D$, and $D$ is of exponent $p$ or $4$. Let $D_{i}=D\cap Z_{i}(S)$ and $n\in\mathbb{N}$ such that $D_n=D$. By hypothesis, every element of $\Aut_\F(S)$ acts on every element of $D$ as conjugation by an element of $S$. Hence, $\Aut_{\F}(S)$ stabilizes the series $1=D_0\leq D_1\leq\dots\leq D_n=D$ and is thus a $p$-group by \cite[Theorem~5.3.2]{G}. Since $\Inn(S)\in\Syl_{p}(\Aut_{\F}(S))$,
it follows $\Aut_{\F}(S)=\Inn(S)$ as required.
\end{proof}

\section{Proof of Theorem~\ref{App}}

Throughout, assume that $G$ is a finite group and that $\Omega$ is an algebraically closed field of prime characteristic $p$. Let $H^{\ast}(G)$ and $V_G$ be as in the introduction. Recall that, for any subgroup $H$ of $G$, we write $\res_{G,H}\colon H^{\ast}(G)\rightarrow H^{\ast}(H)$ for the map induced by the inclusion map from $H$ to $G$, and  $\res_{G,H}^{\ast}\colon V_H\rightarrow V_G$ for the corresponding map of varieties.

\smallskip

For the proof of Theorem~\ref{App} we will need some more notation: For every elementary abelian $p$-group $A$, we set
\[V_A^+:=V_A\backslash \bigcup_{A'<A}\res_{A,A'}^{\ast}V_{A'}.\]
If $A$ is an elementary abelian subgroup of $G$, set 
\[V_{G,A}^+=\res_{G,A}^{\ast}V_A^+.\] 
We start with the following elementary observation:

\begin{remark}\label{Remark}
 Let $A\leq K\leq G$ such that $A$ is elementary abelian. Then  
$\res_{G,K}^*V_{K,A}^+=V_{G,A}^+$. 
\end{remark}

\begin{proof}
 As $\res_{G,K}^{\ast}\circ\res_{K,A}^{\ast}=\res_{G,A}^{\ast}$, we have 
 $V_{G,A}^+=\res_{G,A}^{\ast}V_A^+=\res_{G,K}^{\ast}(\res_{K,A}^{\ast}V_A^+)=\res_{G,K}^{\ast}V_{K,A}^+$.
\end{proof}

\begin{remark}\label{EvenDegreeRemark}
Write $H^{ev}(G)$ for the subring of $H^\ast(G)$ of elements of even degree. If $k=\mathbb{F}_p$ notice that the $k$-algebra homomorphisms from $H^\ast(G)$ to $\Omega$ are the same as the ring homomorphisms from $H^\ast(G)$ to $\Omega$. So if $k=\mathbb{F}_p$ then, upon replacing  $H^\ast(G)$ by $H^{ev}(G)$ if $p$ is odd, the variety $V_G$  corresponds to the variety $H_G(X)(\Omega)$ studied by Quillen \cite{Q} in the special case that $X$ is a point. If $k=\Omega$, it follows from Hilbert's Nullstellensatz that $V_G$ is homeomorphic to the maximal ideal spectrum of $H^\ast(G)$ via the map sending every homomorphism in $V_G$ to its kernel; see Theorem~5.4.2 and the surrounding discussion in \cite{B}. So again upon replacing $H^\ast(G)$ by $H^{ev}(G)$, the variety $V_G$ as defined in this paper corresponds to the variety $V_G$ as defined by Benson \cite{B}. 

\smallskip

It is common to study the variety of $H^{ev}(G)$ rather than the variety of $H^\ast(G)$, because $H^{ev}(G)$ is commutative, whereas $H^\ast(G)$ is only graded commutative, and texts on commutative algebra are written for strictly commutative rings. As pointed out by Benson \cite[p.9]{BComm}, the results from commutative algebra which are needed in the theory hold accordingly for graded commutative rings. Moreover, it is pointed out that any graded commutative ring $A$ is commutative modulo its nilradical, and every element of odd degree lies in the nilradical if $p$ is odd. So writing $\mathfrak{Nil}$ for the nilradical of $H^\ast(G)$, it follows that $H^\ast(G)/\mathfrak{Nil}$ is isomorphic to $H^{ev}(G)/(H^{ev}(G)\cap\mathfrak{Nil})$. As the nilradical $\mathfrak{Nil}$ is contained in the kernel of every $k$-algebra homomorphism from $H^*(G)$ to $\Omega$, the variety $\Hom_k(H^\ast(G),\Omega)$ is canonically homeomorphic to the variety $\Hom_k(H^{ev}(G),\Omega)$. 
\end{remark}

In particular, the Quillen Stratification Theorem as stated in \cite[Theorem~10.2]{Q} and \cite[Theorem~5.6.3]{B} can be proved accordingly with our definitions:

\begin{theorem}[Quillen's Stratification Theorem]\label{QuillenStrat}
 Let $\m{A}$ be a set of representatives of the $G$-conjugacy classes of elementary abelian subgroups of $G$. Then $V_G$ is the disjoint union
\[V_G=\coprod_{A\in \m{A}}V_{G,A}^+.\]
of locally closed subvarieties $V_{G,A}^+$. Moreover, for every $A\in \m{A}$, the automorphism group $\Aut_G(A)$ acts freely on $V_A^+$ and the map $\res_{G,A}^{\ast}$ induces a homeomorphism $V_A^+/\Aut_G(A)\rightarrow V_{G,A}^+$.
\end{theorem}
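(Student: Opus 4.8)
The plan is to deduce the statement from the classical Quillen Stratification Theorem --- \cite[Theorem~10.2]{Q} when $k=\mathbb{F}_p$ and \cite[Theorem~5.6.3]{B} when $k=\Omega$ --- by transporting every ingredient across the canonical homeomorphisms recorded in Remark~\ref{EvenDegreeRemark}. No new geometric input is needed; the entire task is to check that the bookkeeping matches up, which is exactly the ``accordingly'' alluded to in the sentence preceding the statement.

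\textbf{Setting up the correspondences.} First I would recall from Remark~\ref{EvenDegreeRemark} that, writing $\mathfrak{Nil}$ for the nilradical of $H^\ast(G)$, every $k$-algebra homomorphism $H^\ast(G)\to\Omega$ annihilates $\mathfrak{Nil}$, so $V_G=\Hom_k(H^\ast(G),\Omega)$ is canonically homeomorphic to $\Hom_k(H^\ast(G)/\mathfrak{Nil},\Omega)$, and the latter is identified with $\Hom_k(H^{ev}(G)/(H^{ev}(G)\cap\mathfrak{Nil}),\Omega)$ via the inclusion $H^{ev}(G)\hookrightarrow H^\ast(G)$; when $p$ is odd this permits replacing $H^\ast$ by $H^{ev}$ throughout. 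If $k=\mathbb{F}_p$, this identifies $V_G$ with the variety $H_G(X)(\Omega)$ studied by Quillen in \cite{Q} with $X$ a point; if $k=\Omega$, the Nullstellensatz identifies $\Hom_\Omega(H^{ev}(G),\Omega)$ with the maximal ideal spectrum of $H^{ev}(G)$, which is Benson's $V_G$ \cite{B}. Each of these identifications is natural in $G$ --- being induced by a quotient map, a subring inclusion, or the assignment ``homomorphism $\mapsto$ kernel'' --- and hence commutes with the restriction maps $\res_{G,H}$ for all $H\leq G$. Consequently the maps $\res_{G,A}^\ast$, the subvarieties $V_A\subseteq\Hom_k(H^\ast(A),\Omega)$ together with the open pieces $V_A^+$ obtained by deleting $\bigcup_{A'<A}\res_{A,A'}^\ast V_{A'}$, and the strata $V_{G,A}^+=\res_{G,A}^\ast V_A^+$ all correspond to their classical counterparts under these homeomorphisms; likewise the action of $\Aut_G(A)$ on $V_A$ is the functorial action on $H^\ast(A)$, which matches the classical one.

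\textbf{Conclusion.} With these correspondences in place, the three assertions of the theorem --- the disjoint union decomposition $V_G=\coprod_{A\in\m{A}}V_{G,A}^+$ into locally closed subvarieties, the freeness of the $\Aut_G(A)$-action on $V_A^+$, and the fact that $\res_{G,A}^\ast$ induces a homeomorphism $V_A^+/\Aut_G(A)\to V_{G,A}^+$ --- are literal translations of the corresponding assertions in \cite[Theorem~10.2]{Q} (for $k=\mathbb{F}_p$) and \cite[Theorem~5.6.3]{B} (for $k=\Omega$).

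\textbf{Main obstacle.} I expect the only real work to be the routine verification that the homeomorphisms of Remark~\ref{EvenDegreeRemark} are genuinely compatible with restriction along subgroup inclusions and with the passage to strata --- in particular that deleting $\bigcup_{A'<A}\res_{A,A'}^\ast V_{A'}$ before or after these identifications yields the same subset, and that for elementary abelian $A$ the $\mathbb{F}_p$-model and the $\Omega$-model of $V_A$ are the same affine space over $\Omega$ --- together with keeping straight the two cases $k=\mathbb{F}_p$ and $k=\Omega$ and the harmless replacement of $H^\ast$ by $H^{ev}$ for odd $p$. None of this is conceptually difficult, but it is the part that requires care.
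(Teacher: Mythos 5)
Your proposal matches the paper's approach exactly: the paper gives no independent proof of this theorem, but simply asserts (in the sentence preceding the statement, relying on Remark~\ref{EvenDegreeRemark}) that the classical stratification theorems of Quillen and Benson "can be proved accordingly with our definitions," which is precisely the transport-of-structure argument you spell out. Your version is in fact more detailed than the paper's, since you explicitly flag the naturality checks needed to make the identifications compatible with restriction maps and with the formation of the strata $V_{G,A}^+$.
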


The fact that $V_G=\coprod_{A\in\m{A}}V_{A,G}^+$ for any set $\m{A}$ of representatives of the $G$-conjugacy classes of the elementary abelian subgroups of $G$, will be used in our proof in the following form: 

\begin{remark}\label{WeakQuillenStrat}
Let $A$ and $A'$ be elementary abelian subgroups of $G$. If $A$ and $A'$ are $G$-conjugate then we have  $V_{G,A}^+=V_{G,A'}^+$, and if $A$ and $A'$ are not $G$-conjugate then $V_{G,A}^+$ and $V_{G,A'}^+$ are disjoint. 
\qed
\end{remark}

\begin{proof}[Proof of Theorem~\ref{App}.]
Assume that the restriction of the map $\res_{G,H}^\ast\colon V_H\rightarrow V_G$ to the subvariety $\res^{\ast}_{H,S\cap O^{p}(G)}V_{S\cap O^{p}(G)}$ of $V_H$ is injective. 

\smallskip

\emph{Step~1:} Let $A$ be an elementary abelian subgroup of $S\cap O^p(G)$. We show that the map $\res_{G,H}^\ast$ induces a bijection from $V_{H,A}^+$ to $V_{G,A}^+$. Moreover, if $A'$ is another elementary abelian subgroup of $S\cap O^p(G)$ such that $V_{G,A}^+=V_{G,A'}^+$, then we show $V_{H,A}^+=V_{H,A'}^+$.

\smallskip 

To see this note that, by Remark~\ref{Remark}, we have that $\res_{G,H}^\ast V_{H,A}^+=V_{G,A}^+$ and that  $V_{H,A}^+=\res_{H,S\cap O^p(G)}^\ast V_{S\cap O^p(G),A}^+$ is contained in $\res^{\ast}_{H,S\cap O^{p}(G)}V_{S\cap O^{p}(G)}$. By a symmetric argument, it follows that $V_{H,A'}^+$ is contained in $\res^{\ast}_{H,S\cap O^p(G)}V_{S\cap O^p(G)}$ and $\res_{G,H}^\ast V_{H,A}^+=V_{G,A}^+=V_{G,A'}^+=\res_{G,H}^\ast V_{H,A'}^+$. As we assume that the restriction of $\res_{G,H}^\ast$ to $\res^{\ast}_{H,S\cap O^p(G)}V_{S\cap O^p(G)}$ is injective, the above assertion follows.

\smallskip

\noindent\emph{Step~2:} Let $A$ and $A'$ be two $G$-conjugate elementary abelian subgroups of $S\cap O^p(G)$. We show that $A$ and $A'$ are $H$-conjugate. For the proof note that $V_{G,A}^+=V_{G,A'}^+$ by Lemma~\ref{WeakQuillenStrat}. So by Step~1, we have $V_{H,A}^+=V_{H,A'}^+$. Thus, again by Lemma~\ref{WeakQuillenStrat} now used with $H$ in place of $G$, the subgroups $A$ and $A'$ need to be $H$-conjugate. This completes the proof of Step~2.

\smallskip

\noindent\emph{Step~3:} Let $A$ be an elementary abelian subgroup of $S\cap O^p(G)$. We show that $\Aut_G(A)=\Aut_H(A)$. By the Quillen stratification theorem Theorem~\ref{QuillenStrat}, the group $\Aut_G(A)$ acts freely on $V_A^+$, and the map  $\res_{G,A}^{\ast}$ induces a homeomorphism $V_A^+/\Aut_G(A)\rightarrow V_{G,A}^+$. In particular, the fibres of the map $\res_{G,A}^{\ast}\colon V_A^+\rightarrow V_{G,A}^+$ are precisely the orbits of $\Aut_G(A)$ on $V_A^+$. Similarly, applying the Quillen stratification theorem with $H$ in place of $G$, we get that $\Aut_H(A)$ acts freely on $V_A^+$, and the fibres of the map $\res_{H,A}^{\ast}\colon V_A^+\rightarrow V_{H,A}^+$ are precisely the orbits of $\Aut_H(A)$ on $V_A^+$. Note that $\res_{G,A}^{\ast}=\res_{G,H}^{\ast}\circ\res_{H,A}^{\ast}$. As the map $\res_{G,H}^\ast\colon V_{H,A}^+\rightarrow V_{G,A}^+$ is by Step~1 a bijection, it follows that the maps $\res_{G,A}^{\ast}\colon V_A^+\rightarrow V_{G,A}^+$ and $\res_{H,A}^{\ast}\colon V_A^+\rightarrow V_{H,A}^+$ have the same fibres. So the $\Aut_G(A)$-orbits on $V_A^+$ are the same as the $\Aut_H(A)$-orbits. As the actions of $\Aut_G(A)$ and $\Aut_H(A)$ on $V_A^+$ are free, this implies that $|\Aut_G(A)|=|\Aut_H(A)|$. Thus, since $\Aut_H(A)\subseteq\Aut_G(A)$, it follows $\Aut_G(A)=\Aut_H(A)$. 

\smallskip

\noindent\emph{Step~4:} We are now in a position to complete the proof. Let $A$ and $A'$ be elementary abelian subgroups of $S\cap O^p(G)$. We want to show that $\Hom_G(A,A')=\Hom_H(A,A')$ and can assume without loss of generality that $A$ and $A'$ are $G$-conjugate. Then $A$ and $A'$ are $H$-conjugate by Step~1 and thus there exists $\psi\in\Hom_H(A,A')$. Let $\phi\in\Hom_G(A,A')$. Note that $\phi=\psi\circ (\psi^{-1}\circ \phi)$ and $\psi^{-1}\circ\phi\in\Aut_G(A)=\Aut_H(A)$ by Step~2. So it follows that $\phi\in\Hom_H(A,A')$ which proves $\Hom_G(A,A')=\Hom_H(A,A')$. By Puig's hyperfocal subgroup theorem \cite[\S 1.1]{P5}, we have $S\cap O^p(G)=\h(\F_S(G))$. So using Theorem~\ref{MainThm}, we can conclude that $\F_S(G)=\F_S(H)$. Thus, by the Cartan--Eilenberg stable elements formula \cite[XII.10.1]{CE}, the map $\res_{G,H}\colon H^{\ast}(G)\rightarrow H^{\ast}(H)$ is an isomorphism.
\end{proof}

\begin{remark}\label{FusionSystemCohomology}
A version of Theorem~\ref{App} can also be formulated and proved for abstract saturated fusion systems rather than for groups. Let $\F$ be a saturated fusion system over a finite $p$-group $S$. Assume that $k$ is an algebraically closed field of characteristic $p$. The cohomology ring $H^\ast(\F)=H^{\ast}(\F,k)$ of the saturated fusion system
$\F$ is the subring of $\F$-stable element in $H^\ast(S)=H^{\ast}(S,k)$,
which is the subring of $H^{\ast}(S)$ consisting of elements $\xi\in H^{\ast}(S)$ such that
$\res^{S}_{P}(\xi)=\res_{\varphi}(\xi)$
for any $\varphi\in \Hom_{\F}(P,S)$ and any subgroup $P\leq S$.
The ring $H^{\ast}(\F)$ is a graded commutative ring. We write $V_{\F}$ for the maximal ideal spectrum of $H^{\ast}(\F)$, or alternatively for the variety of $k$-algebra homomorphisms from $H^\ast(\F)$ to $k$.

Let $\m{G}$ be a saturated fusion subsystem of $\F$. Note that any $\F$-stable element of $H^{\ast}(S)$ is in particular $\m{G}$-stable, so we can consider the inclusion map $\res_{\F,\m{G}}\colon H^\ast(\F)\rightarrow H^\ast(\m{G})$ which then gives us a map $\res_{\F,\m{G}}^\ast\colon V_{\m{G}}\rightarrow V_\F$ of varieties. Similarly, if $Q\leq S$, we are given a $k$-algebra homomorphism $\res_{\F,Q}\colon H^\ast(\F)\rightarrow H^\ast(Q)$ by composing the inclusion map $H^\ast(\F)\hookrightarrow H^\ast(S)$ with the restriction map $\res_{S,Q}\colon H^\ast(S)\rightarrow H^\ast(Q)$. Again, this induces a map of varieties $\res_{\F,Q}^\ast\colon V_Q\rightarrow V_\F$. 
In particular, if $A\leq S$ is elementary abelian, one can define $V_{\F,A}^+=\res_{\F,A}^\ast V_A^+$.   In an unpublished preprint, Markus Linckelmann \cite[Theorem~1]{L} proves a version of the Quillen stratification theorem; see also Theorem~1.3 and Remark~1.1 in \cite{T}. 
Using this, one can similarly prove the following version of Theorem~\ref{App} for fusion systems:

\smallskip

Let $\G\subseteq \F$ be an inclusion of saturated fusion systems over the same finite $p$-group $S$, and $p$ an odd prime. If the restriction of the map $\res_{\F,\G}^\ast\colon V_\G\rightarrow V_\F$ to $\res_{\G,\h(\F)}^\ast V_{\h(\F)}$ is injective, then $\F=\G$ and in particular $H^\ast(\F)=H^\ast(\G)$. 
\end{remark}

\bibliographystyle{amsalpha}

\end{document}